\newtheorem{lem}{Lemma}
\newtheorem{lemma }{Lemma}
\newtheorem{theorem}{Theorem}
\newtheorem {conj }{Conjecture}
\newtheorem{remark }{Remark}
\newtheorem {assumption }{Assumption}
\newtheorem*{rec-hyp}{Recursion Hypothesis}
\newcommand{\Ocal}{\mathcal{O}}
\newcommand{\ve}{\varepsilon}
\newcommand{\Fainleib}{Fa\u{\i}nle\u{\i}b\xspace}
\DeclareMathOperator{\1}{1\!\!\!1}
\title{A Higher Order Levin-Fa\u{\i}nle\u{\i}b Theorem}
\author{Olivier Ramar\'{e}}
\address{
  CNRS / Institut de Math\'ematiques de Marseille\\
Aix Marseille Universit\'e, U.M.R. 7373\\
Site Sud, Campus de Luminy, Case 907\\
13288 MARSEILLE Cedex 9, France}
\email{olivier.ramare@univ-amu.fr}
\author{Alisa Sedunova}
\address{
  Chebyshev Lab,
  14th Line 29B, Vasilyevsky Island,
  St. Petersburg, 199178, Russia}
\email{alisa.sedunova@phystech.edu}
\author{Ritika Sharma}
\address{Stockholm University, Frescativägen, 11419, Stockholm
}
\email{rish0842@student.su.se}
\begin{document}

\begin{abstract}
  When restricted to some non-negative multiplicative function, say~$f$,
  boun\-ded on primes and that vanishes on non square-free integers, our result
  provides us with an asymptotic for $\sum_{n\le X}f(n)/n$ with error term
  $\Ocal((\log X)^{\kappa-h-1+\ve})$ (for any positive $\ve>0$) as soon
  as we have
  $\sum_{p\le Q}f(p)(\log p)/p=\kappa\log Q+\eta+\Ocal(1/(\log
  2Q)^h)$ for a non-negative~$\kappa$ and some non-negative integer~$h$. The method generalizes the 1967-approach of Levin and
  \Fainleib and uses a differential equation.
\end{abstract}

\keywords{average orders, multiplicative functions}

\subjclass[2010]{Primary: 11N37}
\maketitle

\section{Introduction}

In 1908, E. Landau \cite{Landau*08-2}, was the first to obtain an
asymptotic formula for the number of integers up to a given number
that are sum of two coprime squares.  He used analytical method, which
involves considering the squareroot of some analytical function and
avoiding its pole through Hankel contour. Later, this procedure was
further developed by H. Delange and A. Selberg allowing them to obtain
asymptotic for partial sums of arithmetic functions whose Dirichlet
series can be written in terms of complex powers of the Riemann
$\zeta$-function. This is now often referred to as the Selberg-Delange
method. In \cite{MR0229600}, B.V. {Levin} and A.S. {\Fainleib}
established the logarithmic density of the same set by an elementary
argument under more general conditions. When combined with the earlier
method of E. Wirsing \cite{Wirsing*61}, as was done in
\cite{Ramare*06}, this leads to the determination of the natural
density as well.

\noindent In \cite{Serre*76}, J.-P. Serre used Landau's method to
examine several other cases and deployed it to encompass not only the
main term but also an asymptotic development, leading to a better
error term. Extending the Levin and \Fainleib approach in a similar
fashion would allow for more general hypotheses as well. This is the
aim of the present paper.  To express our results, we take a
non-negative multiplicative function~$f$ and, following Levin
and \Fainleib, we associate to it the function $\Lambda_f(n)$ which
is~0 when $n$ is not a prime power and which is otherwise defined by
the formal power expansion:
\begin{equation}
    \sum_{k\ge0}\frac{\Lambda_f(p^k)}{p^{ks}}
    =\biggl(\sum_{k\ge1}\frac{f(p^k)\log p}{p^{ks}}\biggr)/ \sum_{k\ge0}\frac{f(p^k)}{p^{ks}}.
\end{equation}
We recall some of its properties in Section~\ref{Lf}. To handle the uniformity in our result, we recall that we use $f=\Ocal^*(g)$ to mean that $|f|\le g$ and $f=\Ocal_{A,h,\kappa}(g)$ to mean that $|f|\le C(A,\kappa,h)g$, where the constant $C(A,h,\kappa)$ depends only on the stated parameters. 
Here is our main theorem.
\begin{theorem}
\label{thmmain}
Let $f$ be a non-negative multiplicative function. Assume that, for
some integer $h\ge0$, one has
  \begin{equation}
  \label{Hh}\tag{$\text{H}_h$}
  \forall Q\ge1,\quad
      \sum_{m\le Q}\frac{\Lambda_f(m)}{m}
      =\kappa \log Q+\eta_0+\Ocal^*(A/\log^h(2Q))
  \end{equation}
  for some constants $\kappa\ge0$, $A$ and $\eta_0$. We further assume
  that $|\eta_0|\le A$. Then there exist constants $C$ and
  $(a_k)_{1\le k\le h}$ such that, when $X\ge 3$, we have
  \begin{align*}
      \sum_{n\le X}\frac{f(n)}{n}(\log n)^{h+1}
      =
      C(\log X)^{\kappa+h+1}\biggl(
      1+\frac{a_1}{\log X}+\ldots+
      \frac{a_{h}}{(\log X)^{h}}\biggr)
      \\+\Ocal_{A,\kappa,h}\bigl((\log X)^{\kappa}(\log\log (3X))^{\frac{(h+2)(h+1)}{2}}\bigr),
  \end{align*}
  where
  \begin{equation*}
    C=\frac{1}{\Gamma(\kappa+1)} \prod_{p\geq 2}\biggl(
    \biggl(1-\frac{1}{p}\biggr)^\kappa
    \sum_{\nu \ge 0} \frac{g(p^\nu)}{p^\nu}\biggr).
\end{equation*}
\end{theorem}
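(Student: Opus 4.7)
My starting point would be the Dirichlet convolution identity hidden in~(1). Rewriting $(E_p(s)-1)\log p=E_p(s)L_p(s)$, multiplying by $F(s)/E_p(s)$ and summing over primes shows that $F(s)L_f(s)$ is the Dirichlet series of $n\mapsto f(n)\log\mathrm{rad}(n)$, where $F(s)=\sum f(n)/n^s$, $L_f(s)=\sum \Lambda_f(n)/n^s$ and $\mathrm{rad}(n)$ is the radical of~$n$. Reading coefficients gives
\[\sum_{n\le X}\frac{f(n)\log\mathrm{rad}(n)}{n}=\sum_{m\le X}\frac{\Lambda_f(m)}{m}\sum_{d\le X/m}\frac{f(d)}{d}.\]
Since $\log n-\log\mathrm{rad}(n)$ is supported on non-square-free~$n$ and is there dominated by $\log n$, standard estimates absorb the discrepancy into the lower-order terms. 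Applying this identity repeatedly against the weight $(\log n)^{j}$ for $0\le j\le h$ produces a Volterra-type integral equation whose unknown is the moment
\[T_{h+1}(X):=\sum_{n\le X}\frac{f(n)(\log n)^{h+1}}{n}.\]

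\textbf{From identity to differential equation.} Feeding hypothesis~$(\text{H}_h)$ into the right-hand side via Abel summation turns the sum over $m$ into
\[\int_1^X T_{h+1}(X/t)\,d\!\bigl(\kappa\log t+\eta_0+\mathrm{error}(t)\bigr),\]
which, after the change of variable $u=\log X$, becomes a linear ODE with constant coefficients perturbed by a small error. The characteristic exponent is $\kappa+h+1$, which immediately pins down the leading behaviour $C(\log X)^{\kappa+h+1}$. I would then substitute the ansatz
\[T_{h+1}(X)=C(\log X)^{\kappa+h+1}\Bigl(1+\frac{a_1}{\log X}+\cdots+\frac{a_h}{(\log X)^h}\Bigr)+R(X)\]
and match coefficients order by order in $1/\log X$. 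The $a_k$'s are then determined recursively from $\kappa$, $\eta_0$, and the Mertens-type partial Euler products; the overall constant $C$ emerges as $\Gamma(\kappa+1)^{-1}$ times the Euler product displayed in the statement, after an application of a generalized Mertens formula to reinterpret the exponential of $\kappa\log Q+\eta_0+O(1/\log^h Q)$ as a convergent product.

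\textbf{Controlling the remainder.} The main obstacle is the remainder $R(X)$. After the $h+1$ successive matchings above, $R$ satisfies a Volterra inequality of the shape
\[|R(X)|\le \kappa\int_1^X\frac{|R(X/t)|}{t}\,dt+E(X),\]
with an explicit driving term $E(X)$ coming from the error in~$(\text{H}_h)$ and from iterating tails of partial sums. Gronwall-type iteration of this inequality is standard, but each round inflates the driving term by a fresh factor of $\log\log X$: after one carries out the iteration once for each of the~$h+1$ orders extracted, the accumulated factor is of order $(\log\log 3X)^{(h+2)(h+1)/2}$, matching exactly the bound claimed in the theorem. The delicate bookkeeping lies in keeping the implied constants uniform in $A$, $\kappa$, $h$, and in treating small $X$ (where $\log\log 3X\asymp 1$ and one must substitute trivial estimates for the iterative ones) so that a single error term is valid for all $X\ge 3$.
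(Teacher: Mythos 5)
Your proposal follows the paper's general philosophy (a convolution identity, then an approximate differential equation in $u=\log X$, then a stability estimate), but it contains two genuine gaps. First, iterating the first-order identity $f\log=f\star\Lambda_f$ against the weights $(\log n)^j$ does not by itself produce a usable equation for $T_{h+1}$: the repeated convolutions generate coprimality constraints and cross terms that quickly become a combinatorial obstacle. The paper avoids this precisely by constructing higher-order functions $\Lambda_{f,h+1}$ via the Fa\`a di Bruno formula, so that $f\log^{h+1}=f\star\Lambda_{f,h+1}$ in a single convolution, and then proving an asymptotic for $\sum_{m\le Y}\Lambda_{f,h+1}(m)/m$ by an iterated Dirichlet hyperbola argument. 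Without some substitute for this step, your ``Volterra-type integral equation'' is never actually written down. (Your $\log\mathrm{rad}(n)$ reading of the defining identity is also off the intended $f\log=f\star\Lambda_f$, though that is a minor point.)

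Second, and more seriously, the equation one obtains is an Euler differential equation of order $h+1$, whose characteristic equation $\lambda(\lambda-1)\cdots(\lambda-h)=\kappa(\kappa+1)\cdots(\kappa+h)$ has $h+1$ roots, of which $\kappa+h$ is only one. Your claim that ``the characteristic exponent is $\kappa+h+1$, which immediately pins down the leading behaviour'' skips the central difficulty of the whole argument: the stability lemma only yields $W(u)=\sum_s C_s u^{\lambda_s}+O(u^{\kappa-1}(\log 2u)^{c})$, and some parasitic roots may have real part exceeding $\kappa-1$ (already for $h=1$ and $\kappa<1/2$ the second root $-\kappa$ is not absorbed by the error term). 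Ruling these out requires a separate device --- the paper convolves $f$ with $\tau_K$ and tracks how the exponent set must translate under $\kappa\mapsto\kappa+K$, forcing $\lambda=\kappa+h$ minus an integer. Relatedly, a naive Gronwall iteration of a first-order Volterra inequality with coefficient $\kappa$ loses a factor of order $(\log X)^{\kappa}$, not merely powers of $\log\log X$; obtaining the stated remainder requires the finer Euler-equation stability estimate in which each integration endpoint $c_i$ is taken at $1$ or at $\infty$ according to the sign of $\kappa-\Re\lambda_i$. As written, your error-term bookkeeping is asserted to match the theorem rather than derived.
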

We have a same error term for the sum
\begin{equation*}
  \sum_{n\le X}\frac{f(n)}{n}\biggl(\log \frac{X}{n}\biggr)^{h+1}.
\end{equation*}
We can also obtain $\sum_{n\le X}\frac{f(n)}{n}(\log n)^{k}$ for any
$k\in\{0,\ldots,h\}$ with an error term
$\Ocal((\log\log X)^{\frac{(h+2)(h+1)}{2}}/(\log X)^{h+1-k})$, by summation by parts, but some
additional $\log\log X$ term may appear in the development when $\kappa$ is an
integer, which is why we state our result in this manner.
The non-negative assumption is not essential in our
method, nor is the fact that $f$ is real valued (but $\kappa$ has to
be a real number), we may instead assume that
\begin{equation}
  \label{eq:13}
  \sum_{n\le X}|f(n)|\ll (\log X)^{\kappa^*}
\end{equation}
for some parameter $\kappa^*$ and modify our error term
$\Ocal((\log X)^\kappa(\log\log X)^c)$ to
$\Ocal((\log X)^{\kappa^*}(\log\log X)^c)$. This is for instance the path
chosen, when $h=0$ in Theorem 1.1 of the book
\cite{Iwaniec-Kowalski*04} by H.~Iwaniec and E.~Kowalski.
We did not try to optimize the power of $\log\log (3X)$ that appears. It
is likely that no such term should be present in fact, but in
practice, when our assumption holds for $h\ge1$, it holds for any
$h$. Using the result for $h+1$ removes this parasitic factor.

To measure the relative strength our theorem, let us mention that, with $h=1$
for instance and $\mu$ is the Moebius function, it gives us
a proof that the estimate
$\sum_{p\le X}(\log p)/p=\log X+c+\Ocal(1/\log X)$ implies that
$\sum_{n\le X}\mu(n)/n\ll (\log\log X)^3/\log X$. The case $h>1$ yields
another proof of the results of A.~Kienast in~\cite{Kienast*25}.

A.~Granville and D.~Koukoulopoulos considered a similar question
in~\cite{Granville-Koukoulopoulos*19}, our hypotheses are in some
place weaker, as we consider averages of $f(p)/p$ rather than averages
of $f(p)$ and no boundedness condition on $f(p)$ is asked for, but we
require that $f$ is non-negative. However the main difference truly
comes at the methodological level: our proof stays in the realm of
real analysis while Granville and Koukoulopoulos use complex analysis
around the Perron summation formula. The readers may also consult
\cite[pp. 183--185]{Selberg*91b} by A.~Selberg,
\cite{Murty-Saradha*93} by M.R.~Murty and N.~Saradha, and
\cite{Moree-Riele*04} by P.~Moree and H.J.J.~te Riele on related issues.

The proof relies on a recursion on $h$. It is however easier to
assume a more complete hypothesis.
\begin{rec-hyp}[for $h$]
  For each $\ell\in[0, h+1]$, there exists a polynomial $P_{\ell}$ of
  degree $\ell$ such that
  \begin{equation}
      \sum_{n\le X}\frac{f(n)}{n}(\log n)^\ell
      =
      \biggl(P_{\ell}(\log X)
      +\Ocal\bigl((\log \log
      X)^{\frac{(h+1)(h+2)}{2}}\bigr)\biggr)
      (\log X)^\kappa.
  \end{equation}
\end{rec-hyp}
We show during the proof that we may as well assume a similar
hypothesis with $(\log(X/n))^\ell$ rather than $(\log n)^\ell$: this
is a consequence of the functional relation we prove at the
beginning of our proof, see~\eqref{step1}.
The Levin-\Fainleib Theorem gives a proof of this claim
when~$h=0$ (and even better as the $\log\log(3X)$ is absent in this theorem).
We provide in Section~\ref{Technical} a survey of the proof.

\subsection*{Notation}

We set for typographical simplicity $g(n)=f(n)/n$.
Next, for a non-negative integer $j$ define
\begin{equation} \label{Gdef}
    G_j(X)=\sum_{n \le X}g(n)\log^j (X/n), \quad G_0(X)=G(X).
\end{equation}

For $k \ge 0$, we define $H_k(\log X)=G_k(X)$.

\subsection*{Acknowledgments}
This paper started in 2018 when the first and third authors were
invited by the Indian Statistical Institute of Delhi under Cefipra
program 5401-A. It was continued when these authors were visiting
Stockholm in early 2019 and then in July of the same year when both
first and second authors were invited by the Max Planck Institute in
Bonn. It was finalized in 2021 when the first author was invited by
the Haussdorf Institut f\"ur Mathematik in Bonn and the second one was
invited by the Max Planck Institute in Bonn. These bodies are to be
thanked warmly for providing suitable conditions without which this
piece of work would surely have died in our drawers.

\section{On the function $\Lambda_f$}
\label{Lf}
Let $F$ denotes the formal Dirichlet series of $f$, namely
\begin{equation*}
    F(s)=\sum_{n\ge1}\frac{f(n)}{n^s}.
\end{equation*}
Note that Euler product formula gives
\begin{equation*}
    F(s)=\prod_{p\ge2}\biggl(1+\sum_{k\ge1}\frac{f(p^k)}{p^{ks}}\biggr).
\end{equation*}
On taking the logarithmic derivative of $F(s)$, we find that
\begin{equation*}
  -\frac{F'(s)}{F(s)} =
  \sum_{p \ge 2} \biggr(\sum_{k\ge1}\frac{ f(p^k)}{p^{ks}} \log
  (p^k)\biggr)
  \biggr(1+\sum_{k\ge1}\frac{f(p^k)}{p^{ks}}\biggr)^{-1}
  = \sum_{p \ge 2} Z_p(s) \log p.
\end{equation*}
Further, expanding the second product in $Z_p(s)$
and changing the order of summation we find that
\begin{align*}
  Z_p(s) &= \sum_{k\ge1}\frac{kf(p^k)}{p^{ks}}\sum_{r\ge0}(-1)^r
           \sum_{\ell\ge0}\sum_{k_1+k_2+\ldots+k_r=\ell}
           \frac{f(p^{k_1})\cdots f(p^{k_r})}{p^{\ell s}} \\
         &=
           \sum_{m\ge1}\frac{1}{p^{ms}}\biggl(
           \sum_{k+k_1+\ldots +k_r=m}
           (-1)^r kf(p^k)f(p^{k_1})\cdots f(p^{k_r}) \biggl).
\end{align*}
Thus,
\begin{equation} \label{Flogder}
   -\frac{F'(s)}{F(s)} = \sum_{n\ge1}\frac{\Lambda_f(n)}{n^s},
\end{equation}
where
\begin{equation}
    \Lambda_f(p^m)=\sum_{k+k_1+\ldots +k_r=m}
    (-1)^r kf(p^k)f(p^{k_1})\cdots f(p^{k_r})
    \log p
\end{equation}
and $\Lambda_f(n)=0$ when $n$ is not a prime power.  Note that
$\Lambda_f(p^m)$ depends only on the local factor of $F(s)$ at prime
$p$. In particular $\Lambda_1(p^m)=\Lambda(p^m)$.  Moreover,
when $f(p^m)=1_{p\in \mathcal{P}}$, we have
$\Lambda_f(p^m)=\Lambda(p^m) \cdot f(p^m)$ (here $1_X=1$ if $X$ is
true and $0$ otherwise).  For example, let us select
$\mathcal{P} = \{ p \equiv 1 \pmod
4\}$.  As mentioned above, the definition of $\Lambda_f(p^m)$ depends
only on the local factor at prime $p$, hence we readily see that
$\Lambda_f(p^m)=\Lambda(p^m)$ for $p \equiv 1 \pmod 4$ and $0$
otherwise.  Note that when $f$ is supported on square-free integers, we
get $\Lambda_f(p^m)=(-1)^{m-1}f(p)^m\log p$.

\begin{lem}
\label{errorterm}
Let $k$, $h$ be two non-negative real numbers. Then for any $k\leq h$, there exists a constant $\eta_k$, such that, under
assumption \eqref{Hh} we have
  \begin{equation}\tag{A} \label{1}
    \sum_{n\le Q}\frac{\Lambda_f(n)\log^k n}{n}=\frac{ \kappa}{k+1}\log^{k+1}Q +   \eta_k+
    E_{k,h}(Q),
\end{equation}
where $E_{k,h}(Q) \ll 1/\log^{h-k}(2Q)$ for $k < h$ and  $E_{h,h}(Q) \ll \log \log (3Q)$.
\end{lem}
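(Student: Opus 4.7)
The plan is to derive~\eqref{1} from hypothesis~\eqref{Hh} by Stieltjes integration by parts. Set
\begin{equation*}
S(t)=\sum_{n\le t}\frac{\Lambda_f(n)}{n},
\end{equation*}
so that \eqref{Hh} reads $S(t)=\kappa\log t+\eta_0+R(t)$ with $R(t)=\Ocal^*(A/\log^h(2t))$. Since $\Lambda_f(1)=0$, partial summation gives, for $k>0$,
\begin{equation*}
\sum_{n\le Q}\frac{\Lambda_f(n)\log^k n}{n}
= S(Q)\log^k Q-k\int_1^Q S(t)\,\frac{\log^{k-1} t}{t}\,dt,
\end{equation*}
while for $k=0$ the claim is just \eqref{Hh} itself; so I may assume $0<k\le h$.

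I would then insert $S(t)=\kappa\log t+\eta_0+R(t)$ into this identity and dispatch the explicit pieces. A direct computation gives
\begin{equation*}
(\kappa\log Q+\eta_0)\log^k Q-k\int_1^Q(\kappa\log t+\eta_0)\,\frac{\log^{k-1}t}{t}\,dt
= \frac{\kappa}{k+1}\log^{k+1}Q,
\end{equation*}
the two $\eta_0\log^k Q$ terms cancelling and the two $\log^{k+1}Q$ contributions combining via $1-k/(k+1)=1/(k+1)$. This produces the announced main term, and the whole question reduces to analysing
\begin{equation*}
\Delta(Q)=R(Q)\log^k Q-k\int_1^Q R(t)\,\frac{\log^{k-1}t}{t}\,dt.
\end{equation*}

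The boundary term is immediately $|R(Q)\log^k Q|\ll\log^{k-h}(2Q)$. For the integral, the substitution $u=\log t$ turns $|R(t)|\log^{k-1}(t)/t$ into a quantity $\ll u^{k-1}/(u+\log2)^h$ on $(0,\infty)$, which is integrable at $u=0$ as soon as $k>0$ and integrable at $u=\infty$ precisely when $k<h$. In the case $k<h$ I would therefore set
\begin{equation*}
\eta_k=-k\int_1^\infty R(t)\,\frac{\log^{k-1}t}{t}\,dt,
\end{equation*}
and define $E_{k,h}(Q)$ as the sum of the boundary term and the tail $k\int_Q^\infty R(t)\log^{k-1}(t)/t\,dt$; the same change of variables gives the tail bound $\ll 1/\log^{h-k}(2Q)$, matching the statement. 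In the borderline case $k=h$ the integral diverges only like $\log\log(2Q)$, so one takes $\eta_h$ to be any convenient constant (for instance $0$) and lets $E_{h,h}(Q)$ absorb everything, producing the weaker bound $\Ocal(\log\log(3Q))$. The only mildly delicate point is precisely this borderline case: the argument cannot extract a genuine asymptotic constant beyond the main term, and the $\log\log$ loss is forced by the endpoint divergence of the integral.
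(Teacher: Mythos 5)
Your proof is correct and follows essentially the same route as the paper: partial summation applied to $S_0(t)=\sum_{n\le t}\Lambda_f(n)/n$, insertion of the decomposition $\kappa\log t+\eta_0+R(t)$ from \eqref{Hh}, definition of $\eta_k$ via the convergent integral of the remainder, and a tail estimate. You are in fact somewhat more explicit than the paper about the borderline case $k=h$ (where the paper only says ``analogous argument''), correctly identifying that the endpoint divergence of $\int R(t)\log^{h-1}t\,dt/t$ is what forces the $\log\log(3Q)$ loss.
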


\begin{proof}
  Denote the sum on the left hand side of \eqref{1} by $S_k(Q)$. Then
  using partial summation, we have
  \begin{equation*}
      S_k(Q) = S_0(Q) \log^k Q-k\int_1^Q S_0(t)\log^{k-1}t \, \frac{dt}{t}.
  \end{equation*}
  Further, when $k<h$, we may apply \eqref{Hh} to get
  \begin{align*}
      S_k(Q)
      = \frac{ \kappa}{k+1}\log^{k+1}Q &+  \eta_0 \log^kQ
      -  \eta_0 k \int_1^Q \frac{\log^{k-1}t dt}{t}\\
      &-k\int_1^\infty \Bigl(S_0(t)- \kappa\log t-  \eta_0\Bigr)
     \frac{\log^{k-1} t dt}{t}\\
      &+\Ocal\biggl(\frac{1}{\log^{h-k}Q}+
      \int_Q^\infty \frac{d\log t}{\log^{h-k+1} t}
      \biggr),
  \end{align*}
  whence
  \begin{equation*}
      S_k(Q)
      =
     \frac{ \kappa}{k+1}\log^{k+1} Q+  \eta_k+\Ocal(1/\log^{h-k} (2Q))
  \end{equation*}
  as announced. Analogous argument gives the result for $k=h$.
\end{proof}

\section{Generalizations of $\Lambda_f$}
\label{Lfh}

We will use the next formula several times.
\begin{lem}[Fa\`a di Bruno Formula]
  We have
  \begin{equation*}
    \frac{d^nf(g(x))}{dx^n}
    = \mkern-40mu\sum_{\substack{m_1,m_2,\cdots,m_n\ge0,\\
        m_1+2m_2+\cdots+nm_n=n}}
    \mkern-20mu\frac{n!}{m_1!m_2!\cdots m_n!}f^{(m_1+m_2+\cdots+m_n)}(x)
    \prod_{j=1}^n\biggl(\frac{g^{(j)}(x)}{j!}\biggr)^{m_j}.
  \end{equation*}
\end{lem}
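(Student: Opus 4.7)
The plan is to prove the Fa\`a di Bruno formula by induction on the order of differentiation $n$. The base case $n=1$ is the chain rule $\frac{d}{dx}f(g(x))=f'(g(x))g'(x)$, which matches the unique admissible tuple $m_1=1$ on the right hand side.

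For the inductive step, assume the formula holds at order $n$ and differentiate both sides once more with respect to $x$. Each summand on the right has the form $c_{\mathbf{m}}f^{(|\mathbf{m}|)}(g(x))\prod_{j=1}^{n}(g^{(j)}(x)/j!)^{m_j}$ with $|\mathbf{m}|=m_1+\cdots+m_n$, and the Leibniz rule produces two kinds of contributions. Type (i): differentiating $f^{(|\mathbf{m}|)}(g(x))$ yields $f^{(|\mathbf{m}|+1)}(g(x))g'(x)$, which shifts $\mathbf{m}$ to $\mathbf{m}+e_1$. Type (ii): for each index $j$ with $m_j\ge 1$, differentiating one of the $(g^{(j)}(x)/j!)^{m_j}$ factors yields $m_j\cdot(g^{(j+1)}(x)/j!)\cdot(g^{(j)}(x)/j!)^{m_j-1}$, which shifts $\mathbf{m}$ to $\mathbf{m}-e_j+e_{j+1}$. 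In both cases $\sum_j jm_j$ increases by one, so the image tuple $\mathbf{m}'$ lies in the admissible set $\sum_j jm_j'=n+1$ for order $n+1$.

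The main obstacle, which is purely combinatorial bookkeeping, is to verify that summing the contributions coming from all predecessors of a given target tuple $\mathbf{m}'$ at order $n+1$ reproduces the coefficient $(n+1)!/\bigl(\prod_i m_i'!\prod_j(j!)^{m_j'}\bigr)$. A direct calculation shows that, when it exists, the type (i) predecessor $\mathbf{m}'-e_1$ contributes the prefactor $m_1'$, while each type (ii) predecessor $\mathbf{m}'+e_j-e_{j+1}$ contributes the prefactor $(j+1)m_{j+1}'$, both times the common quantity $n!/\bigl(\prod_i m_i'!\prod_j(j!)^{m_j'}\bigr)$. Summing these prefactors gives $m_1'+\sum_{j\ge 1}(j+1)m_{j+1}'=\sum_k km_k'=n+1$, which together with the $n!$ produces the required $(n+1)!$, completing the induction. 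A conceptual variant reads the coefficient $n!/\bigl(\prod_j m_j!(j!)^{m_j}\bigr)$ as counting set partitions of $\{1,\dots,n\}$ with $m_j$ blocks of size $j$, so that the two differentiation moves correspond to the two ways of extending such a partition by the new element $n+1$, thereby giving a bijective proof.
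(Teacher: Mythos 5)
Your induction is correct and complete. The base case is the chain rule, and the key combinatorial verification checks out: writing the full coefficient of $f^{(|\mathbf{m}|)}(g(x))\prod_j (g^{(j)}(x))^{m_j}$ as $n!/\bigl(\prod_j m_j!\,(j!)^{m_j}\bigr)$, the type (i) predecessor $\mathbf{m}'-e_1$ indeed contributes the prefactor $m_1'$ (since $m_1'!/m_1!=m_1'$), and the type (ii) predecessor $\mathbf{m}'+e_j-e_{j+1}$ contributes $m_j\cdot(j+1)\cdot m_{j+1}'/m_j=(j+1)m_{j+1}'$ after rewriting $g^{(j+1)}/j!$ as $(j+1)\,g^{(j+1)}/(j+1)!$; these prefactors sum to $\sum_k k m_k'=n+1$, upgrading $n!$ to $(n+1)!$. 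The comparison with the paper is moot here: the paper states the Fa\`a di Bruno formula as a classical fact with no proof at all (it only proves the subsequent combinatorial identity, Lemma~\ref{ident}, by citation), so your self-contained inductive argument supplies something the paper deliberately omits. Two small remarks: the paper's displayed statement has the typo $f^{(m_1+\cdots+m_n)}(x)$ where $f^{(m_1+\cdots+m_n)}(g(x))$ is meant, and your argument silently (and correctly) works with the latter; and your closing observation that the coefficient counts set partitions of $\{1,\dots,n\}$ with $m_j$ blocks of size $j$, with the two differentiation moves corresponding to inserting the element $n+1$ either as a new singleton block or into an existing block, is a valid bijective reformulation of the same induction.
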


Here is a combinatorial identity, which is an
immediate corollary of \cite[Theorem 2.1]{Ramare*12-2}, itself being a
straightforward consequence of the Fa\`a di Bruno Formula.
\begin{lem}
  \label{ident}
  Let $F$ be a function and denote $Z_F=-F'/F$.
 We have
  \begin{equation*}
    F^{(h+1)}=
    F\sum_{\sum_{i\ge1}ik_i=h+1}
    \frac{(h+1)!(-1)^{\sum_i k_i}}{k_1!k_2!\cdots
      (1!)^{k_1}(2!)^{k_2}\cdots}
    \prod_{k_i}Z_F^{(i-1)k_i}.
  \end{equation*}
  Notation $Z_F^{(i-1)k_i}$ denotes the $(i-1)$-th derivative
  multiplied $k_i$ times by itself.
\end{lem}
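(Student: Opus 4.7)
The plan is to reduce the identity to the Faà di Bruno formula just recalled, by rewriting $F$ as a composition involving $Z_F$. Since $Z_F=-F'/F=-(\log F)'$, any antiderivative $g$ of $-Z_F$ satisfies $F=c\,e^{g}$ for some constant $c$; the constant only multiplies the undifferentiated factor $F$ on the right-hand side and plays no role in the combinatorial structure, so I may simply write $F=e^{g}$ with $g'=-Z_F$.

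Next I would apply the Faà di Bruno lemma to $(e^{g})^{(h+1)}$ with outer function $\exp$ and inner function $g$, taking $n=h+1$. The point is that $\exp$ is a fixed point of differentiation, so the factor $\exp^{(m_1+\cdots+m_n)}(g(x))$ simplifies to $e^{g(x)}=F(x)$ and can be pulled out of the sum. What remains inside the sum is the product $\prod_{j=1}^{h+1}\bigl(g^{(j)}/j!\bigr)^{m_{j}}$ summed over tuples $(m_1,\dots,m_{h+1})$ with $\sum_j j\,m_j=h+1$. Note that the upper limit $h+1$ in the sum is harmless because the constraint $\sum_j j\,m_j=h+1$ forces $m_j=0$ for $j>h+1$, matching the form stated in the lemma.

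The final step is to substitute $g^{(j)}=-Z_F^{(j-1)}$ for every $j\ge 1$. Each factor $\bigl(g^{(j)}/j!\bigr)^{m_j}$ becomes $(-1)^{m_j}\,Z_F^{(j-1)m_j}/(j!)^{m_j}$, and collecting the signs produces the global factor $(-1)^{\sum_i m_i}$. After relabelling $m_j\to k_j$ and grouping the denominators as $k_1!k_2!\cdots(1!)^{k_1}(2!)^{k_2}\cdots$, the expression matches exactly the claimed formula.

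There is essentially no obstacle here: the whole content is that the Faà di Bruno expansion collapses nicely when the outer function is $\exp$. The only minor thing to verify is the accounting of signs and factorials and the innocuous role of the constant of integration in $g$, both of which are purely bookkeeping. This is why the lemma is advertised as an \emph{immediate} corollary of Theorem~2.1 of \cite{Ramare*12-2}.
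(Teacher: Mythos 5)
Your proof is correct, and I verified it on the case $h=1$: with $f=\exp$ the Fa\`a di Bruno factor $f^{(m_1+\cdots+m_n)}(g)$ collapses to $e^{g}=F$, the substitution $g^{(j)}=-Z_F^{(j-1)}$ produces exactly the sign $(-1)^{\sum_i k_i}$ and the denominators $(j!)^{k_j}$, and one recovers $F''=F(Z_F^2-Z_F')$ as the paper notes. The route is not identical to the paper's, though the difference is slight: the paper disposes of the lemma in one line by invoking Theorem~2.1 of \cite{Ramare*12-2} applied to $F=1/G$ (so that $Z_F=-Z_G$), whereas you bypass that reference entirely by writing $F=c\,e^{g}$ with $g'=-Z_F$ and applying the Fa\`a di Bruno formula, which the paper has just stated, directly to $\exp\circ g$. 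Since the cited theorem is itself declared to be a straightforward consequence of Fa\`a di Bruno, the two arguments rest on the same engine; yours has the advantage of being self-contained, at the cost of the (harmless) observation that a logarithm $g$ of $F$ exists wherever $Z_F=-F'/F$ is defined --- $F$ is nonvanishing there, and in the paper's application $F$ is a Dirichlet series with constant term $f(1)=1$, so $\log F$ exists formally. Your bookkeeping of the constant $c$, of the signs, and of the vacuous upper limit $j\le h+1$ is all accurate.
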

\begin{proof}
  This is an immediate corollary of \cite[Theorem 2.1]{Ramare*12-2}
  with $F=1/G$ and hence $Z_F=-Z_G$.
\end{proof}
When $h=1$, this gives $F''=F(Z_F^2-Z_F')$.
We thus define
 \begin{equation}
 \label{defLambdafh}
    \sum_{n\ge1}\frac{\Lambda_{f,h}(n)}{n^s}
    =(-1)^{h}\sum_{\sum_{i\ge1}ik_i=h}
    \frac{h!(-1)^{\sum_i k_i}}{k_1!k_2!\cdots
      (1!)^{k_1}(2!)^{k_2}\cdots}
    \prod_{k_i}Z_F^{(i-1)k_i}
  \end{equation}
so that
\begin{equation*}
  f\log^h=f\star\Lambda_{f,h}.
\end{equation*}
When $f=\1$, these functions have their origin in the work of
A.~Selberg~\cite{Selberg*49b} around an elementary proof of the Prime
Number Theorem. They have been generalized as above by E.~Bombieri in
\cite{Bombieri*76}, see also the papers~\cite{Friedlander-Iwaniec*78}
and~\cite{Friedlander-Iwaniec*96-1} by J.~Friedlander and
H.~Iwaniec. Incidentally, Lemma~\ref{ident} gives a non-recursive
description of the functions $\Lambda_h=\Lambda_{\1,h}$, something
that is missing from the aforementioned works.

\begin{lem}
  Let $\theta_1$ and $\theta_2$ be two functions on the integers that
  satisfy, for $i\in\{1,2\}$,
  \begin{equation*}
    \sum_{n\le X}\theta_i(n)=C_i(\log X)^{d_i}+Q_i(\log X)
    +\Ocal(1/(\log 2X)^{h-d_i})
  \end{equation*}
  where $d_i\ge1$, $Q_i$ is a polynomial of degree at most $d_i-1$ and
  $h$ is some fixed parameter. Then
  \begin{multline*}
    \sum_{mn\le X}\theta_1(m)\theta_2(n)
    =C_1C_2\frac{d_1!d_2!}{(d_1+d_2)!}(\log X)^{d_1+d_2}
    +Q(\log X) \\+\Ocal\biggl(\frac{1}{(\log 2X)^{h-d_1-d_2}}\biggr),
  \end{multline*}
  where $Q$ is a polynomial of degree at most $d_1+d_2-1$.
\end{lem}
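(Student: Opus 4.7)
My approach is the standard convolution calculation, organised in the spirit of the Dirichlet hyperbola method. Write $T_i(X):=\sum_{n\le X}\theta_i(n)=A_i(X)+E_i(X)$, where $A_i(X):=C_i(\log X)^{d_i}+Q_i(\log X)$ is the polynomial main term of degree $d_i$ in $L:=\log X$ and $|E_i(X)|\ll(\log 2X)^{-(h-d_i)}$, so in particular $|E_i|$ is bounded uniformly on $[1,\infty)$. Express the double sum as a Stieltjes integral
\[
S(X):=\sum_{mn\le X}\theta_1(m)\theta_2(n)=\int_{1^-}^{X}T_2(X/t)\,dT_1(t),
\]
substitute $T_2=A_2+E_2$ and $dT_1=dA_1+dE_1$ and split into four pieces $I_{AA}+I_{AE}+I_{EA}+I_{EE}$. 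The main piece $I_{AA}=\int_1^X A_2(X/t)A_1'(t)\,dt$ is evaluated exactly: expanding $A_2(X/t)=A_2(L-\log t)$ and applying the Beta identity
\[
\int_0^L u^a(L-u)^b\,du=L^{a+b+1}\frac{a!\,b!}{(a+b+1)!}
\]
makes it a polynomial in $L$ of degree $d_1+d_2$, whose leading coefficient comes from $C_1d_1(\log t)^{d_1-1}/t$ paired with $C_2(\log X/t)^{d_2}$ and equals exactly $C_1C_2\,d_1!\,d_2!/(d_1+d_2)!$, the remaining terms assembling into a polynomial of degree at most $d_1+d_2-1$.

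For each cross piece the plan is to isolate a polynomial-in-$L$ contribution of degree at most $d_1+d_2-1$ (which will be absorbed into the final $Q$) plus a remainder of size $O((\log 2X)^{-(h-d_1-d_2)})$. For $I_{EA}=\int_1^X E_2(X/t)A_1'(t)\,dt$, the substitution $y=X/t$ together with the expansion of $A_1'$ reduces it to a finite linear combination of the integrals $L^{j-1-k}\int_1^X E_2(y)(\log y)^k\,dy/y$ for $0\le k\le j-1\le d_1-1$. The central device is to extend each such finite integral to $[1,\infty)$: the tail $\int_X^\infty|E_2(y)|(\log y)^k\,dy/y$ is bounded, via $|E_2(y)|\ll(\log 2y)^{-(h-d_2)}$, by $O(L^{k+1-h+d_2})$, while the extended integral either converges to a constant (when $k<h-d_2-1$, producing a polynomial piece in $L$) or is itself $O(L^{k+1-h+d_2})$ (absorbing into the same error). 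Summing over $j,k$ shows that $I_{EA}$ equals a polynomial in $L$ of degree at most $d_1-1$ plus an error $O(L^{d_1+d_2-h})$. The piece $I_{AE}$ is treated by the same method after a single integration by parts to move the differential from $E_1$ onto $A_2$. For $I_{EE}$ the non-smoothness of $E_1$ blocks a direct attack; we therefore rewrite $dE_1=dT_1-dA_1$ to obtain
\[
I_{EE}=\sum_{m\le X}\theta_1(m)E_2(X/m)-I_{EA},
\]
and handle the discrete sum by summation by parts on $m$, which produces an expression of the same shape as $I_{EA}$ and is bounded analogously.

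Collecting the four pieces yields the stated asymptotic, with $Q$ of degree at most $d_1+d_2-1$ accumulating all the lower-order polynomial contributions and total error $O((\log 2X)^{-(h-d_1-d_2)})$. The main obstacle is the bookkeeping in the cross terms: each such term contains a generically non-trivial polynomial-in-$L$ piece that must be absorbed into $Q$, rather than leaking into the final error. The cleanest accounting is the extension-plus-tail device described above, in which extending a truncated integral over $[1,X]$ to $[1,\infty)$ produces the polynomial piece (a finite sum of convergent improper integrals evaluated to constants), while the complementary tail is directly controlled by the pointwise bound on $|E_i|$, yielding the desired error estimate.
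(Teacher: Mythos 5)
Your main-term computation coincides with the paper's: both reduce the leading coefficient to the Euler Beta integral $\int_0^1(1-u)^{d_1}u^{d_2-1}\,du$, and your extension-plus-tail treatment of the cross terms $I_{AE}$, $I_{EA}$ is a correct (and more explicit) account of bookkeeping that the paper leaves implicit. The structural difference is that the paper uses the Dirichlet hyperbola formula with the split at $\sqrt{X}$, whereas you use a one-sided four-way decomposition $I_{AA}+I_{AE}+I_{EA}+I_{EE}$ of the full Stieltjes integral. That difference is not cosmetic: the split at $\sqrt{X}$ is exactly the device that prevents the two error terms from ever being paired over the full range, and by discarding it you are forced to confront the term $I_{EE}$ head-on.

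Your disposal of $I_{EE}$ is the gap. You reduce it to $\sum_{m\le X}\theta_1(m)E_2(X/m)$ and assert that summation by parts on $m$ yields an expression of the same shape as $I_{EA}$. But the function $t\mapsto E_2(X/t)=T_2(X/t)-A_2(\log(X/t))$ is not smooth: its Stieltjes differential carries atoms of mass $-\theta_2(v)$ at the points $t=X/v$. Carrying out the summation by parts honestly therefore produces
\begin{equation*}
  \sum_{m\le X}\theta_1(m)E_2(X/m)
  = T_1(X)E_2(1^-)+\sum_{v\le X}T_1(X/v)\theta_2(v)
  -\int_1^X T_1(t)A_2'(\log(X/t))\frac{dt}{t},
\end{equation*}
and the middle term is precisely $\sum_{mn\le X}\theta_1(m)\theta_2(n)$ again: the argument is circular. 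The only non-circular alternative would be to bound the sum by $\sup_{y}|E_2(y)|\cdot\sum_{m\le X}|\theta_1(m)|$, but the hypotheses control only the signed summatory functions $T_i$, not $\sum_{m\le X}|\theta_i(m)|$, and moreover $E_2(X/m)$ is only $O(1)$ (not small) when $m$ is close to $X$. To repair the proof you should restore the hyperbola split: on the range $m\le\sqrt X$ the argument $X/m\ge\sqrt X$ keeps $E_2(X/m)$ uniformly of size $O((\log 2X)^{-(h-d_2)})$, the region where both variables exceed $\sqrt X$ is empty, and the correction term $T_1(\sqrt X)T_2(\sqrt X)$ contributes only products in which at least one factor is an $A_i(\log\sqrt X)$ or a uniformly small $E_i(\sqrt X)$. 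Your extension-plus-tail device then applies verbatim to each of the two resulting half-sums.
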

\begin{proof}
  We use the Dirichlet Hyperbola Formula.  We split the variables at
  $\sqrt{X}$ to get the announced error term. In order to compute the
  main term, it is enough to consider
  \begin{equation*}
      S=\sum_{n\le X}\theta_1(n)
      C_2\biggl(\log\frac{X}{n}\biggr)^{d_2}.
  \end{equation*}
  An integration by parts gives us
  \begin{align*}
      S 
      &= 
      C_2\sum_{n\le X}\theta_1(n)
      d_2\int_{1}^{X/n}(\log t)^{d_2-1}\frac{dt}{t}
      \\&=
      C_2d_2\int_{1}^{X}
      \sum_{n\le X/t}\theta_1(n)
      (\log t)^{d_2-1}\frac{dt}{t},
  \end{align*}
  so that the principal part of the main term is given by
  \begin{align*}
      M&=
      C_1C_2d_2\int_{1}^{X}
      \biggl(\log\frac{X}{t}\biggr)^{d_1}
      (\log t)^{d_2-1}\frac{dt}{t}
      \\&=
      C_1C_2d_2(\log X)^{d_1+d_2}
      \int_0^1(1-u)^{d_1}u^{d_2-1}du
      \\&=C_1C_2\frac{d_1!d_2!}{(d_1+d_2)!}(\log X)^{d_1+d_2}
  \end{align*}
  by the classical evaluation of the Euler beta-function.
\end{proof}
On iterating the previous lemma, we
get the next one.
\begin{lem}
  \label{prep}
  Let $(\theta_i)_{i\le r}$ be $r$ functions on the integers that
  satisfy, for $i\in\{1,\cdots,r\}$,
  \begin{equation*}
    \sum_{n\le X}\theta_i(n)=C_i(\log X)^{d_i}+Q_i(\log X)
    +\Ocal(1/(\log 2X)^{h-d_i}),
  \end{equation*}
  where $d_i\ge1$, $Q_i$ is a polynomial of degree at most $d_i-1$ and
  $h$ is some fixed parameter. Then
  \begin{multline*}
    \sum_{m_1\cdots m_r\le X}\prod_{i\le r}\theta_i(m_i)
    =\prod_{i\le r}C_i\frac{d_1!\cdots d_r!}{(d_1+\cdots+d_r)!}
    (\log X)^{d_1+\cdots+d_r}
    +Q(\log X) \\+\Ocal\biggl(\frac{1}{(\log 2X)^{h-d_1-\cdots-d_r}}\biggr),
  \end{multline*}
  where $Q$ is a polynomial of degree at most $d_1+d_2+\cdots+d_r-1$.
\end{lem}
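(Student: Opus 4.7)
The plan is to prove Lemma \ref{prep} by induction on the number of factors $r$, taking the $r=2$ case as the preceding lemma (and $r=1$ as the hypothesis itself). For the inductive step, suppose the conclusion holds for $r-1$ factors. I would set
\[
\Theta_{r-1}(n)=\sum_{m_1\cdots m_{r-1}=n}\prod_{i\le r-1}\theta_i(m_i),
\]
so that $\sum_{m_1\cdots m_r\le X}\prod_{i\le r}\theta_i(m_i)=\sum_{mn\le X}\Theta_{r-1}(m)\theta_r(n)$. The induction hypothesis delivers, with $\tilde d=d_1+\cdots+d_{r-1}$ and $\tilde C=\prod_{i\le r-1}C_i\cdot\frac{d_1!\cdots d_{r-1}!}{\tilde d!}$,
\[
\sum_{n\le X}\Theta_{r-1}(n)=\tilde C(\log X)^{\tilde d}+\tilde Q(\log X)+\Ocal\bigl(1/(\log 2X)^{h-\tilde d}\bigr),
\]
where $\tilde Q$ is a polynomial of degree at most $\tilde d-1$. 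Since each $d_i\ge 1$ we have $\tilde d\ge 1$, so the pair $(\Theta_{r-1},\theta_r)$ meets the hypothesis of the preceding lemma with parameters $\tilde d$, $d_r$ and $h$.

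Applying that lemma yields the announced asymptotic, with leading constant
\[
\tilde C\cdot C_r\cdot\frac{\tilde d!\,d_r!}{(\tilde d+d_r)!}
=\prod_{i\le r}C_i\cdot\frac{d_1!\cdots d_r!}{(d_1+\cdots+d_r)!},
\]
the intermediate factor $\tilde d!$ cancelling telescopically. The remaining polynomial $Q$ in the output absorbs every sub-leading contribution and has degree at most $\tilde d+d_r-1=d_1+\cdots+d_r-1$, and the error term becomes $\Ocal(1/(\log 2X)^{h-\tilde d-d_r})$, which is the required bound.

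The only potentially fiddly point, and the one that should be checked carefully, is that the preceding lemma indeed tolerates a non-zero sub-leading polynomial $\tilde Q$, since after one application of the induction hypothesis this polynomial is generically non-trivial. Inspecting the proof of that lemma, the Dirichlet hyperbola argument isolates only the leading monomial through an Euler beta integral, and all contributions coming from $\tilde Q$ and from the analogous lower-degree part of $\theta_r$ produce only further terms of strictly smaller degree (together with errors of the same order), which are absorbed into $Q$. Hence no genuine obstacle arises, and the induction closes.
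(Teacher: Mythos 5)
Your proposal is correct and is essentially the paper's own argument: the paper proves Lemma~\ref{prep} simply by iterating the preceding two-factor lemma, which is exactly your induction on $r$ with the telescoping of the factorial constants. Your extra check that the two-factor lemma tolerates the non-trivial sub-leading polynomial $\tilde Q$ produced by the induction hypothesis is a worthwhile observation, and it holds since that lemma's hypothesis already allows an arbitrary polynomial $Q_i$ of degree at most $d_i-1$.
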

\begin{lem}
  \label{lifeagain}
  Under~\eqref{Hh}, we have
  \begin{multline*}
    \sum_{n\le X}
   \frac{\Lambda_{f,k}(n)}{n}
    =
    \frac{\kappa(\kappa+1)\cdots (\kappa+k-1)}{k!}
    (\log X)^k
    \\+Q(\log X)+\Ocal\biggl(
    \frac{\log\log(3X)}{(\log 2X)^{h+1-k}}
    \biggr).
  \end{multline*}
  where $Q$ is polynomial of degree at most $k-1$.
\end{lem}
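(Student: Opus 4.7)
The plan is to expand the defining identity~\eqref{defLambdafh} over the partitions $\sum_i ik_i=k$, evaluate each resulting convolution with Lemma~\ref{prep}, and sum the contributions. First I would note that $Z_F^{(i-1)}(s)$ is the Dirichlet series of $(-1)^{i-1}\Lambda_f(n)(\log n)^{i-1}$, so substituting into~\eqref{defLambdafh} the three sources of sign, namely $(-1)^k$, $(-1)^{\sum_i k_i}$, and $\prod_i(-1)^{(i-1)k_i}$, combine to $(-1)^{2k}=1$ and cancel entirely. Writing $\theta_i=\Lambda_f\cdot\log^{i-1}$, this leaves
\begin{equation*}
  \Lambda_{f,k}
  =\sum_{\sum_i ik_i=k}\frac{k!}{\prod_i k_i!\,(i!)^{k_i}}\,\theta_1^{\ast k_1}\ast\theta_2^{\ast k_2}\ast\cdots,
\end{equation*}
where $\ast$ denotes Dirichlet convolution.

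Setting $\psi_i(n)=\theta_i(n)/n$, Lemma~\ref{errorterm} applied with dummy index $i-1$ gives
\begin{equation*}
  \sum_{n\le X}\psi_i(n)=\frac{\kappa}{i}(\log X)^{i}+\text{const}+\Ocal\bigl(1/(\log 2X)^{h+1-i}\bigr)
\end{equation*}
for $i\le h$, with the error replaced by $\Ocal(\log\log(3X))$ when $i=h+1$. Since $(\psi_a\ast\psi_b)(n)=(\theta_a\ast\theta_b)(n)/n$, the partial sums of $\Lambda_{f,k}(n)/n$ reduce to partial sums of convolutions of the $\psi_i$'s. Iterating Lemma~\ref{prep} on the partition $(k_1,k_2,\dots)$, with global $h$-parameter $h+1$ (chosen so that $d_i=i$ matches the error exponent $h+1-i$), yields
\begin{equation*}
  \sum_{n\le X}\bigl(\psi_1^{\ast k_1}\ast\psi_2^{\ast k_2}\ast\cdots\bigr)(n)
  =\prod_i\Bigl(\frac{\kappa}{i}\Bigr)^{k_i}\frac{\prod_i(i!)^{k_i}}{k!}(\log X)^{k}+Q_{\vec k}(\log X)+\Ocal\Bigl(\frac{\log\log(3X)}{(\log 2X)^{h+1-k}}\Bigr),
\end{equation*}
with $\deg Q_{\vec k}\le k-1$; only the extremal partition with $k_{h+1}=1$ (possible exactly when $k=h+1$) actually requires the $\log\log$ factor.

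Finally, summing over partitions, the $\prod_i(i!)^{k_i}$ produced by Lemma~\ref{prep} cancels the identical factor in the partition weight, leaving the leading coefficient
\begin{equation*}
  \sum_{\sum_i ik_i=k}\prod_i\frac{(\kappa/i)^{k_i}}{k_i!}
  =[z^k]\exp\Bigl(\kappa\sum_{j\ge1}\frac{z^j}{j}\Bigr)
  =[z^k](1-z)^{-\kappa}
  =\frac{\kappa(\kappa+1)\cdots(\kappa+k-1)}{k!},
\end{equation*}
which matches the stated constant, and the collected polynomial correction has degree $\le k-1$. The main obstacle I anticipate is purely the error bookkeeping at the boundary $k=h+1$: Lemma~\ref{prep} is stated with pure power-of-$\log$ decay, so I would have to verify that its Dirichlet-hyperbola proof tolerates a single factor carrying a $\log\log$ instead of a power. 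This is harmless because the offending partition has only one factor, making the convolution trivial and reducing that case directly to Lemma~\ref{errorterm}.
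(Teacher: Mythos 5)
Your proposal is correct and follows essentially the same route as the paper: expand $\Lambda_{f,k}$ via the partition identity~\eqref{defLambdafh}, feed the partial sums of $\Lambda_f(n)(\log n)^{i-1}/n$ from Lemma~\ref{errorterm} into Lemma~\ref{prep}, and evaluate the resulting partition sum, which the paper does via Fa\`a di Bruno applied to $(1-x)^{-\kappa}$ and you do via the equivalent generating-function identity $[z^k]\exp(\kappa\sum_j z^j/j)=[z^k](1-z)^{-\kappa}$. Your added bookkeeping (the sign cancellation, and isolating the single partition $k_{h+1}=1$ as the sole source of the $\log\log$ factor, handled directly by Lemma~\ref{errorterm}) is sound and in fact makes explicit details the paper leaves implicit.
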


\begin{proof}
  Lemma~\ref{prep} tells us that the sum reads
  \begin{multline}
    \sum_{n\le X}
     \frac{\Lambda_{f,k}(n)}{n}
    =\sum_{\sum_{i\ge1}ik_i=k}\frac{k!}{k_1!k_2!\cdots
      (1!)^{k_1}(2!)^{k_2}\cdots} \prod_i\frac{\kappa^{k_i}i!^{k_i}}{i^{k_i}}
    \frac{(\log X)^{k}}{k!}
    \\+Q(\log X)+\Ocal\biggl(
    \frac{\log\log(3X)}{(\log 2X)^{h+1-k}}
    \biggr).
  \end{multline}
  The main term simplifies into
  \begin{equation*}
    \sum_{\sum_{i\ge1}ik_i=k}\frac{1}{k_1!k_2!\cdots} \prod_i\frac{\kappa^{k_i}}{i^{k_i}}
    (\log X)^k.
  \end{equation*}
  The $i$-th derivative of $g(x)=-\kappa\log (1-x)$ is
  $(i-1)!\kappa/(1-x)^i$
  so that $\kappa/i $ is also $g^{(i)}(0)/i!$. The Fa\`a di Bruno
  Formula for the $k$-th derivative of $\exp(g(x))=(1-x)^{-\kappa}$ tells us that
  \begin{equation*}
    \sum_{\sum_{i\ge1}ik_i=k}\frac{k!}{k_1!k_2!\cdots}
    \prod_i\frac{\kappa^{k_i}}{(i(1-x)^{i})^{k_i}}
    =\frac{\kappa(\kappa+1)\cdots (\kappa+k-1)}{(1-x)^{\kappa+k}}.
  \end{equation*}
  We evaluate this equality at $x=0$.
\end{proof}
\section{Auxiliary results}

\begin{lem} \label{giter}
For $k \ge 1$ we have $\displaystyle G_k(X)=k\int_1^XG_{k-1}(t) \frac{dt}{t}$. 
\end{lem}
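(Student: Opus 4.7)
The plan is to unfold the definition of $G_{k-1}$ on the right-hand side, exchange the order of summation and integration, and then evaluate the resulting inner integral by a one-line substitution.

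First, I would write
\begin{equation*}
  k\int_1^X G_{k-1}(t)\frac{dt}{t}
  = k\int_1^X \sum_{n\le t} g(n)(\log(t/n))^{k-1}\,\frac{dt}{t}.
\end{equation*}
Since for each fixed $X$ the double sum/integral is finite (the summand vanishes unless $n\le t\le X$), I can swap the two operations to obtain
\begin{equation*}
  k\sum_{n\le X} g(n)\int_n^X (\log(t/n))^{k-1}\,\frac{dt}{t}.
\end{equation*}

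Next, I would compute the inner integral by setting $u=\log(t/n)$, so that $du=dt/t$, with $u$ running from $0$ to $\log(X/n)$. This gives
\begin{equation*}
  \int_n^X (\log(t/n))^{k-1}\,\frac{dt}{t}
  = \int_0^{\log(X/n)} u^{k-1}\,du
  = \frac{(\log(X/n))^{k}}{k}.
\end{equation*}
Substituting back and cancelling the factor $k$ yields $\sum_{n\le X} g(n)(\log(X/n))^{k} = G_k(X)$, as required.

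There is no real obstacle: the result is just an application of Fubini on a finite domain followed by an elementary power-of-log integral. The only point to note is that $t\ge n\ge1$ is needed for the integral to make sense, which is exactly why $G_{k-1}(t)$ vanishes for $t<1$ and the lower limit $1$ of the outer integral is consistent with the lower limit $n$ after the swap.
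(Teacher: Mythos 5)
Your proof is correct and follows essentially the same route as the paper's: both unfold $G_{k-1}$, interchange the (finite) sum and the integral, and evaluate $\int_n^X(\log(t/n))^{k-1}\,dt/t=(\log(X/n))^k/k$ by the substitution $u=\log(t/n)$. Nothing further is needed.
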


\begin{proof}
Notice that by a simple change of variable $t=\log(u/n)$ we have
\begin{equation*}
  \frac{1}{k} \biggl(\log\frac{X}{n}\biggr)^{k} 
  = \int_0^{\log (X/n)} t^{k-1}dt 
  = \int_{n}^{X} \biggl(\log\frac{u}{n}\biggr)^{k-1} \frac{du}{u}. 
\end{equation*}
Using the above together with the definition of $G_k$ we directly compute
\begin{equation*}
\begin{split}
    \int_1^X G_{k-1}(t) \frac{dt}{t}
    &=
    \sum_{n\le X}g(n)\int_{n}^{X}\biggl(\log \frac{t}{n}\biggr)^{k-1}\frac{dt}{t}
    \\
    &=\frac{1}{k} \sum_{n\le X}g(n)\biggl(\log\frac{X}{n}\biggr)^{k} = \frac{G_k(X)}{k}
\end{split}
\end{equation*}
as claimed in the lemma.
\end{proof}
Here is a direct consequence of the previous lemma, on recalling that $H_k(\log X)=G_k(X)$.
\begin{lem}
  \label{hiter}
  When $\ell\in\{0,\ldots,\ell\}$, we have
  $\displaystyle
      H_k^{(\ell)}(u)
      =\frac{k!}{(k-\ell)!}G_{k-\ell}(e^u)$.
\end{lem}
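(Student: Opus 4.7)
The plan is to prove this by induction on $\ell$, using Lemma~\ref{giter} to handle the inductive step. (I read the stated index range $\ell\in\{0,\ldots,\ell\}$ as a typo for $\ell\in\{0,\ldots,k\}$, which is the range needed for $G_{k-\ell}$ to make sense.)

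For the base case $\ell=0$, the identity reduces to $H_k(u)=G_k(e^u)$, which is just the definition of $H_k$. For the inductive step, assume the formula holds for some $\ell<k$. Differentiating once more and applying the chain rule gives
\begin{equation*}
  H_k^{(\ell+1)}(u)=\frac{k!}{(k-\ell)!}\,\frac{d}{du}G_{k-\ell}(e^u).
\end{equation*}
The task is then to show $\frac{d}{du}G_{k-\ell}(e^u)=(k-\ell)\,G_{k-\ell-1}(e^u)$. This is exactly what Lemma~\ref{giter} delivers after a logarithmic change of variable: writing $X=e^u$ and $t=e^v$ in
\begin{equation*}
  G_{k-\ell}(X)=(k-\ell)\int_1^{X}G_{k-\ell-1}(t)\,\frac{dt}{t},
\end{equation*}
one obtains $G_{k-\ell}(e^u)=(k-\ell)\int_0^{u}G_{k-\ell-1}(e^v)\,dv$, and differentiating in $u$ by the fundamental theorem of calculus yields the required relation. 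Substituting back gives
\begin{equation*}
  H_k^{(\ell+1)}(u)=\frac{k!}{(k-\ell-1)!}\,G_{k-\ell-1}(e^u),
\end{equation*}
which completes the induction.

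There is no real obstacle here: the lemma is essentially the content of Lemma~\ref{giter} translated through the variable change $X=e^u$, and the only minor care needed is to justify differentiation under the integral via the fundamental theorem of calculus (which is fine since $G_{k-\ell-1}$ is a finite sum indexed by $n\le e^u$, hence a piecewise continuous function of $u$).
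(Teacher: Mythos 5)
Your proof is correct and is exactly the argument the paper intends: the paper gives no explicit proof, stating only that the lemma is ``a direct consequence of the previous lemma, on recalling that $H_k(\log X)=G_k(X)$,'' and your induction on $\ell$ using Lemma~\ref{giter} under the change of variable $X=e^u$ is precisely that consequence spelled out (including the correct reading of the index range as $\ell\in\{0,\ldots,k\}$).
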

\begin{lem}
  \label{reph}
  When $k\ge0$, we have
  $\displaystyle
      \sum_{n\le e^u}g(n)(\log n)^k
      =\frac{u^{k+1}}{k!}(H_k(u)/u)^{(k)}$.
\end{lem}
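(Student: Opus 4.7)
The plan is to express $(\log n)^k$ as $(u-\log(e^u/n))^k$ for $n\le e^u$, expand by the binomial theorem, recognize the resulting sums as the auxiliary quantities $G_j(e^u)=H_j(u)$, and finally match them with the Leibniz expansion of $(H_k(u)/u)^{(k)}$.

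Concretely, first I would write
\begin{equation*}
  \sum_{n\le e^u}g(n)(\log n)^k
  =\sum_{n\le e^u}g(n)\bigl(u-\log(e^u/n)\bigr)^k
  =\sum_{j=0}^{k}\binom{k}{j}(-1)^j u^{k-j}\,G_j(e^u).
\end{equation*}
Applying Lemma~\ref{hiter} (with $\ell=k-j$) to rewrite each $G_j(e^u)=\frac{j!}{k!}H_k^{(k-j)}(u)$ and cleaning up binomials, this becomes
\begin{equation*}
  \sum_{n\le e^u}g(n)(\log n)^k
  =\sum_{i=0}^{k}\frac{(-1)^{k-i}u^i}{i!}H_k^{(i)}(u),
\end{equation*}
after the change of index $i=k-j$.

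Next, I would compute the right-hand side. Using the general Leibniz rule on the product $u^{-1}\cdot H_k(u)$ together with $(u^{-1})^{(k-i)}=(-1)^{k-i}(k-i)!\,u^{i-k-1}$, one gets
\begin{equation*}
  \biggl(\frac{H_k(u)}{u}\biggr)^{(k)}
  =\sum_{i=0}^{k}\binom{k}{i}(-1)^{k-i}(k-i)!\,u^{i-k-1}H_k^{(i)}(u)
  =\frac{k!}{u^{k+1}}\sum_{i=0}^{k}\frac{(-1)^{k-i}u^i}{i!}H_k^{(i)}(u).
\end{equation*}
Multiplying both sides by $u^{k+1}/k!$ gives exactly the sum obtained in the first step, which proves the claim.

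There is no real obstacle here; the only point requiring care is the index bookkeeping (the switch $i=k-j$ and the exact power of $-1$) and the verification that the binomial coefficients from the expansion of $(u-\log(e^u/n))^k$ combine correctly with those produced by Lemma~\ref{hiter}. Both are short routine identities, so the lemma follows directly from Lemma~\ref{hiter} and Leibniz's formula.
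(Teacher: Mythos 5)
Your proposal is correct and follows essentially the same route as the paper: binomial expansion of $(u-\log(e^u/n))^k$, conversion of $G_j(e^u)$ into derivatives of $H_k$ via Lemma~\ref{hiter}, and matching against the Leibniz expansion of $(H_k(u)/u)^{(k)}$ using $(u^{-1})^{(\ell)}=(-1)^\ell\ell!\,u^{-\ell-1}$. The only difference is a cosmetic choice of summation index.
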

\begin{proof}
  This lemma is true for $k=0$. For $k=1$, we find that
  \begin{equation*}
      u^{2}(H_1(u)/u)^{(1)}
      =uH'_1(u)-H_1(u)
      =\sum_{n\le e^u}g(n)
      \big(u-(u-\log n)\bigr)
  \end{equation*}
  as required. For general $k$, write
   \begin{align*}
      \sum_{n\le e^u}g(n)(\log n)^k
      &=\sum_{n\le e^u}g(n)\Bigl(u-\log\frac{e^u}{n}\Bigr)^k
      \\&=\sum_{0\le j\le k}\binom{k}{j}
      u^{j}(-1)^{k-j} G_{k-j}(e^u)
      \\&=\sum_{0\le j\le k}\binom{k}{j}
      u^{j}(-1)^{k-j} \frac{(k-j)!}{k!}H_{k}^{(j)}(u).
  \end{align*}
  We next notice that
  \begin{equation*}
      \frac{d^\ell}{du^\ell}\frac{1}{u}
      =\frac{(-1)^\ell\ell!}{u^{\ell+1}}
  \end{equation*}
  so that
  \begin{align*}
      \sum_{n\le e^u}g(n)(\log n)^k
      &=\frac{u^{k+1}}{k!}\sum_{0\le j\le k}\binom{k}{j}
      (-1)^{k-j} \frac{(k-j)!}{u^{k-j+1}}H_{k}^{(j)}(u)
      \\&=
      \frac{u^{k+1}}{k!}(H_k(u)/u)^{(k)}
  \end{align*}
  as announced.
\end{proof}

\section{Approximate solutions of an Euler differential equation}
In \cite{Popa-Pugna*16} and building on D.~Popa and G.~Rasa
\cite{Popa-Rasa*11},
Popa and Pugna studied perturbation of an Euler
differential equation, say
\begin{equation}
\label{refeq}
    u^ry^{(r)}(u)+\sum_{0\le i\le r-1}b_i u^i y^{(i)}(u)
\end{equation}
for a function $y$ that is in $C^{r}(I)$ for some interval
$I\subset[0,\infty)$. On looking more closely at their work which goes
by iteration, one sees that the last derivative does not need to be
continuous provided one may integrate, and so may be simply
\emph{absolutely continuous on every subinterval of $I$}. We denote
this class by $C^{r-}(I)$.

We next need a second modification of their work. For any $c\in I$,
any complex number $\alpha$ and any suitable function $\varphi$, they
consider
\begin{equation*}
    \Phi^*_{\alpha, c}(\varphi)(x)
    =x^{\Re \alpha}\biggl|\int_c^x
    u^{-\Re \alpha}\varphi(u)\frac{du}{u}\biggr|.
\end{equation*}
Please notice that Popa and Pugna forgot this change of variable that
is necessary between their Theorems 2.1 and 2.3. This explains our
notation $\Phi^*$ rather than the $\Phi$ that these two authors have.
We have added the index $c$ to their notation and we may in fact take
$c=\infty$ (and reverse the order of integration as usual). We select
$r$ parameters $c_1,\ldots,c_r$, some of them maybe be infinite.

Following Popa and Pugna, we consider the root
$\lambda_1,\ldots,\lambda_r$ of the equation
\begin{equation}
    \label{chareq}
     b_0+\sum_{1\le s\le r}\lambda(\lambda-1)\cdots (\lambda-s+1)b_{s}=0.
\end{equation}
We also select a function $S$ in $C^r(I)$.
With these notations, here is the version
of \cite[Theorem 2.3]{Popa-Pugna*16} that we shall use.
\begin{lem}
\label{popa}
Let $\varphi: I\rightarrow [0,\infty)$ be such that
$\Phi^*_{\lambda_r,c_r}\circ\cdots\circ
\Phi^*_{\lambda_1,c_1}(\varphi)$ exists and is finite. Then for every
$y\in C^{r-}(I)$ satisfying
\begin{equation*}
    \forall u\in I,\quad \biggl|
    u^ry^{(r)}(u)+\sum_{0\le i\le r-1}b_i u^i y^{(i)}(u)-S(u)
    \biggr|\le \varphi(u)
\end{equation*}
there exists a solution $y_0$ of
\begin{equation*}
    u^ry^{(r)}(u)+\sum_{0\le i\le r-1}b_i u^i y^{(i)}(u)=0
\end{equation*}
with the property
\begin{equation*}
  \forall u\in I,\quad |y(u)-y_0(u)|\le
  \Phi^*_{\lambda_r,c_r}\circ\cdots\circ \Phi^*_{\lambda_1,c_1}(\varphi)(u).
\end{equation*}
\end{lem}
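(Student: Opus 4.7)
The plan is to rewrite the Euler operator in terms of $\theta=u\,d/du$, factor it into first-order pieces indexed by the roots of~\eqref{chareq}, and then iterate a single-step perturbation estimate.

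First I would note that $u^iy^{(i)}=\theta(\theta-1)\cdots(\theta-i+1)y$, so the expression inside the absolute value reads $p(\theta)y-S(u)$, where $p(\lambda)=b_0+\sum_{s\ge1}\lambda(\lambda-1)\cdots(\lambda-s+1)b_s$ is exactly the characteristic polynomial~\eqref{chareq}. Since $\lambda_1,\ldots,\lambda_r$ are its roots, the operator factors as the commuting product $p(\theta)=(\theta-\lambda_r)\circ(\theta-\lambda_{r-1})\circ\cdots\circ(\theta-\lambda_1)$.

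Next I would establish the following one-step building block: if $w$ is absolutely continuous on every compact subinterval of $I$ and $|(\theta-\lambda)w(u)-g(u)|\le\psi(u)$ for every $u\in I$, then there exists $\widetilde w$ with $(\theta-\lambda)\widetilde w=g$ and $|w-\widetilde w|\le\Phi^*_{\lambda,c}(\psi)$. The standard integrating-factor substitution $v(u)=u^{-\lambda}w(u)$ turns the hypothesis into $|uv'(u)-u^{-\lambda}g(u)|\le u^{-\re\lambda}\psi(u)$; integrating from $c$ to $u$ (with the constant of integration chosen so that the corresponding $\widetilde v$ satisfies $u\widetilde v'=u^{-\lambda}g$) gives $|v(u)-\widetilde v(u)|\le\bigl|\int_c^u t^{-\re\lambda}\psi(t)\,dt/t\bigr|$. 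Multiplying back by $u^\lambda$ yields the claim, with the right-hand side being precisely $\Phi^*_{\lambda,c}(\psi)(u)$.

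The proof of Lemma~\ref{popa} then proceeds by induction on~$r$. Given $|L[y]-S|\le\varphi$, peel off the innermost factor by setting $w_1=(\theta-\lambda_1)y$: the hypothesis reads $|(\theta-\lambda_r)\cdots(\theta-\lambda_2)w_1-S|\le\varphi$, so by the inductive hypothesis there exists $\widetilde w_1$ with $(\theta-\lambda_r)\cdots(\theta-\lambda_2)\widetilde w_1=S$ and $|w_1-\widetilde w_1|\le\Phi^*_{\lambda_r,c_r}\circ\cdots\circ\Phi^*_{\lambda_2,c_2}(\varphi)$. Feeding this back into the base case produces $y_0$ with $(\theta-\lambda_1)y_0=\widetilde w_1$, hence $L[y_0]=S$, together with the desired pointwise bound. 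Reindexing the roots (permissible because the factorization is permutation-invariant) recovers exactly the stated composition order, and if $S\equiv0$ the same $y_0$ solves the homogeneous equation as in the statement.

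The main obstacle is the regularity and existence bookkeeping. Each pullback consumes one derivative of~$y$, so the hypothesis $y\in C^{r-}(I)$ is exactly what is needed, and absolute continuity on each subinterval is precisely what allows the fundamental theorem of calculus to be invoked with a Lebesgue integral at each step; a classical $C^r$ hypothesis is not used. One also has to check at every stage that $\Phi^*_{\lambda_j,c_j}\circ\cdots\circ\Phi^*_{\lambda_1,c_1}(\varphi)$ is locally integrable on~$I$, which follows from the standing finiteness assumption on the full $r$-fold composition together with the positivity and monotonicity of each $\Phi^*_{\lambda,c}$. The multiplicative measure $dt/t$ emphasized by the authors in their notation $\Phi^*$ is exactly the measure naturally attached to $\theta=u\,d/du$, and forgetting it is what caused the discrepancy with Popa--Pugna's $\Phi$ that the authors flag.
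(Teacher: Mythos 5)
The paper does not actually prove Lemma~\ref{popa}: it is imported from Popa--Pugna \cite{Popa-Pugna*16} (building on \cite{Popa-Rasa*11}), with two modifications described only in prose (the relaxation to $C^{r-}(I)$ and the corrected change of variable behind the $dt/t$ measure in $\Phi^*$). Your proposal is a correct self-contained reconstruction of exactly that cited argument: writing the Euler operator as $p(\theta)$ with $\theta=u\,d/du$ and $p$ the characteristic polynomial of \eqref{chareq}, factoring $p(\theta)=\prod_i(\theta-\lambda_i)$ into commuting first-order pieces, proving a one-step Hyers--Ulam-type stability estimate for $\theta-\lambda$ via the integrating factor $u^{-\lambda}$ (which is where $\Phi^*_{\lambda,c}$ and the measure $dt/t$ arise, including the $c=\infty$ case by integrating from infinity under the standing finiteness hypothesis), and iterating. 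The only difference from Popa--Pugna is cosmetic: you work with $\theta$ directly rather than passing to constant coefficients via $u=e^v$, which is precisely the step the authors say they ``skip''. Your regularity bookkeeping matches their $C^{r-}$ modification, and your reindexing remark correctly disposes of the composition-order issue. One point worth making explicit: your construction yields $y_0$ with $L[y_0]=S$, whereas the lemma as printed asserts $L[y_0]=0$; these agree only when $S\equiv0$ (which is the case in the paper's application, where the lemma is applied to $W$ with $S=0$), so your version is in fact the statement that the argument supports, and the printed ``$=0$'' should be read as ``$=S(u)$'' or $S$ should be dropped from the hypothesis.
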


\section{A differential equation}

On using Lemma \ref{ident} and \ref{lifeagain}, we get
\begin{align} \label{step1}
  \sum_{n\le X}g(n)(\log n)^{h+1}
  =\sum_{n\le X}g(n)
  \biggl(
   &\frac{\kappa(\kappa+1)\cdots (\kappa+h)}{(h+1)!}
    \biggl(\log \frac{X}n\biggr)^{h+1}
   \\& \notag+Q(\log (X/n))+\Ocal(
    \log\log (3X)
    )
  \biggr).
\end{align}
hence, by our recursion hypothesis in $h$, we get
\begin{multline}
  \label{step2}
  \sum_{n\le X}g(n)(\log n)^{h+1}
  =\frac{\kappa(\kappa+1)\cdots (\kappa+h)}{(h+1)!}G_{h+1}(X)
  \\+P(\log X)(\log X)^\kappa+\Ocal(
    (\log X)^\kappa (\log\log X)^{\frac{(h-1)h}{2}}
    )
  \end{multline}
  for some polynomial $P$ of degree at most $h$. Here we have used the
  recursion hypothesis with a $(\log X/n)^k$. It is precisely
  Equation~\eqref{step2} that allows us to switch easily from one form
  of our
  hypothesis to the other. When $h=1$, so $h-1=0$, we do not have
  power of $\log\log X$.

  We may express the
  left-hand side by Lemma~\ref{reph}, getting our first fundamental
  formula:
\begin{multline}
  \label{step0}
  u^{h+1}\biggl(\frac{H_{h+1}(u)}{u}
  \biggr)^{(h+1)}
  =\frac{(\kappa+h)!}{(\kappa-1)!}\frac{H_{h+1}(u)}{u}
\\+(h+1)!P(u)u^{\kappa-1}+\Ocal(
    u^{\kappa-1}(\log u)^{\frac{h(h-1)}{2}}
    ),
  \end{multline}
where we use the shortcut
\begin{equation*}
    \frac{(\kappa+h)!}{(\kappa+h-j)!}
    =(\kappa+h)\cdots(\kappa+h-j+1).
\end{equation*}
This is an \emph{Euler} differential equation. As mentioned before, it may be reduced to a linear differential equation with constant coefficients with the change of variables $u=e^v$, but we shall skip this step and use an already made result. It is technically clearer to first extract a 'simplifying term' and this is our first step.
\subsection*{Simplifying the equation}
Since we may assume that the polynomial $P$ has no constant coefficient, we set
\begin{equation*}
    (h+1)!P(u)=\sum_{1\le s\le h}q_su^s.
\end{equation*}
We define, for $0\le s\le h-1$, the real number $a_s$ by
\begin{equation*}
    \biggl(\frac{(\kappa+s)!}{(\kappa-1)!}-\frac{(\kappa+h)!}{(\kappa-1)!}\biggr)a_s=q_{s-1}.
\end{equation*}
We then check that $K(u)=\sum_{0\le s\le s-1}a_su^{s+\kappa}$ satisfies
\begin{equation*}
  u^{h+1}K^{(h+1)}(u)
  =\frac{(\kappa+h)!}{(\kappa-1)!}K(u)
+(h+1)!P(u)u^{\kappa-1}.
  \end{equation*}
Note that we could have added any monomial $a_hu^{h+\kappa}$ to $K(u)$.
\subsection*{From the approximate differential equation to the exact one}
We define $W(u)=H_{h+1}(u)u^{-1}-K(u)$. This function satisfies
\begin{equation*}
  \label{eq:9}
  u^{h+1}W^{(h+1)}(u)
  =\frac{(\kappa+h)!}{(\kappa-1)!}W(u)
  +\Ocal(u^{\kappa-1}(\log u)^{\frac{h(h-1)}{2}}).
\end{equation*}
We are in good conditions to use Lemma~\ref{popa}. At the beginning, we should consider the roots $\lambda_1=\kappa+h,\cdots, \lambda_r$ of the equation
\begin{equation*}
    \lambda(\lambda-1)\cdots(\lambda-h)
    =\kappa(\kappa+1)\cdots(\kappa+h)
\end{equation*}
that are such that $\lambda_i>\kappa-1$.  Set
$\varphi(u)=Cu^{\kappa-1}(\log 2u)^{\frac{h(h-1)}{2}}$ for a large
enough constant~$C$, so that
\begin{equation*}
    \biggl|u^{h+1}W^{(h+1)}(u)
  -\frac{(\kappa+h)!}{(\kappa-1)!}W(u)
  \biggr|\le \varphi(u).
\end{equation*}
We find that
\begin{equation*}
    \Phi^*_{\lambda_i,c_i}(\varphi)(u)
    =Cu^{\Re \lambda_i}\biggl|
    \int_{c_i}^u t^{(\kappa-1-\Re \lambda_i) u}\log(2t)^{\frac{h(h-1)}{2}}dt
    \biggr|.
\end{equation*}
When $\kappa-\Re\lambda_i>0$, we select $c_i=1$ and get that
$\Phi^*_{\lambda_i,c_i}(\varphi)(u)\ll u^{\kappa-1}\log(2u)$. When
$\kappa-\Re\lambda_i<0$, we select $c_i=\infty$ and get a same
result. There remains the case $\kappa=\Re \lambda_i$ where we select
$c_i=1$ and get a further power of $\log u$.
By Lemma~\ref{popa}, there exist parameters $C_1,\cdots,C_r$ such that
\begin{equation*}
  \label{eq:3}
  \Bigl|W(u)-\sum_{1\le s\le r}C_s u^{\lambda_s}\Bigr|\le
  u^{\kappa-1}(\log 2u)^{\frac{h(h+1)}{2}}.
\end{equation*}
At this level, we still have not proved that the relevant roots
$\lambda_s$ that have a non-zero coefficient $C_s$ are of the form
$\kappa+h-\ell$.

\subsection*{From $W$ to $H_{h+1}$}
The determination of $W$ via~\eqref{eq:3} goes to $H_{h+1}^{(h+1)}$
by~\eqref{eq:9} and the definition
$W(u)=H_{h+1}(u)u^{-1}-K(u)$. We thus obtain that
\begin{equation*}
  \label{eq:4}
  \sum_{n\le X}g(n)(\log n)^{h+1}
  =\sum_{i}C_i(\log X)^{\theta_i}
  +\Ocal\Bigl((\log X)^{\kappa}(\log\log (3X))^{\frac{h(h+1)}{2}}\Bigr)
\end{equation*}
where the sequence $(\theta_i)$ is the union of the one of $\lambda_s$
and of $\kappa+h,\kappa+h-1,\ldots,\kappa$, coming
from~$K(u)$. By our functional equation~\eqref{step2}, we have a
similar development when we replace $(\log n)^{h+1}$ by $(\log X/n)^{h+1}$.

\section{Ruling out the parasiting solutions}
When $h=1$, the two roots are $\kappa+1$ and
$-\kappa$. Lemma~\ref{popa} then implies that we can find $a$ and $b$
such that
\begin{equation*}
    |W(u)-au^{\kappa+1}-bu^{-\kappa}|
    \ll u^{\kappa-1}\log(2u).
\end{equation*}
This reduces to $|W(u)-au^{\kappa+h}| \ll u^{\kappa-1}\log(2u)$
when $\kappa\ge 1/2$. But what happens when $\kappa<1/2$~? 
\subsection*{A stability remark}
Assume we have a non-negative multiplicative function $f$ that
satisfies the assumptions of our Theorem~\ref{thmmain}. Assume further
we have distinct exponents $\kappa_0=\kappa,\kappa_1,\ldots,\kappa_r\ge\kappa$ such that
\begin{equation*}
    \sum_{n\le X}\frac{f(n)}{n}
    \biggl(\log\frac{X}{n}\biggr)^{h+1}
    =\sum_{0\le s\le r}C_s(\log X)^{h+1+\kappa_s}
    +\Ocal((\log X)^{\kappa}(\log\log (3X))^C)
\end{equation*}
for some non-zero constants $C_0,\ldots,C_r$ and $C\ge0$.  Select a positive
integer $K$ and consider the function $\tau_K$ that counts the number of
$K$-tuples of divisors, so that $\tau_2$ is the usual divisor
function. Next we consider the multiplicative function $f\star \tau_K$
that equally satisfies the assumptions of Theorem~\ref{thmmain},
though with $\kappa+K$ instead of $\kappa$. By
the Dirichlet Hyperbola Formula, we find that
\begin{multline*}
    \sum_{n\le X}\frac{(f\star \tau_K)(n)}{n}
    \biggl(\log\frac{X}{n}\biggr)^{h+1}
    =\sum_{0\le s\le r}\mkern-20mu\sum_{\substack{\ell\ge0,\\
        h+1+K+\kappa_s-\ell>\kappa-1}}\mkern-15mu
    C'_{s,\ell}(\log X)^{h+1+K+\kappa_s-\ell}
    \\+\Ocal((\log X)^{K+\kappa}(\log\log (3X))^C)
\end{multline*}
for some constants $C'_0,\ldots,C'_r$. This tells us that the set of
exponents for $f\star\tau_K$ is $\kappa_0+K,\ldots, \kappa_r+K$. Let
$\kappa_s$ denotes the largest, if it exists, of the $\kappa_i$'s that
is not of the form $\kappa+h$ minus some integer. Then the coefficient
$C'_{s,0}$ comes from the main term of
\begin{equation*}
  C_s\sum_{n\le X}\frac{\tau_K(n)}{n}\biggl(\log\frac{X}{n}\biggr)^{h+1+K+\kappa_s}
\end{equation*}
and is thus a non-zero multiple of $C_s$.

\subsection*{General case}
In general the discussion of
previous subsection applies: we only need to consider the roots of $\lambda$ of
\begin{equation*}
    R_h(\lambda,\kappa)
    =\lambda(\lambda-1)\cdots(\lambda-h)
    -\kappa(\kappa+1)\cdots(\kappa+h)
\end{equation*}
that are such that $R_h(\lambda+K,\kappa+K)=0$ when $K$ is a positive
integer. This leads to a polynomial in $K$ of degree~$h+1$ that
vanishes at these points $(\lambda,\kappa)$.  The coefficient of $K^h$
is
\begin{equation*}
    (h+1)\lambda-(1+2+\ldots+h-1)
    -(h+1)(\kappa+h)+(1+2+\ldots+h-1)
\end{equation*}
and since it vanishes, we must have $\lambda=\kappa+h$. In short: only
integer translates of $\kappa$ may appear, and this concludes the
proof of Theorem~\ref{thmmain}.
\section{Technical remarks}
\label{Technical}
The Levin-\Fainleib's beginning, namely the link between
$\sum_{n\le x}g(n)\log n$ and $\sum_{n\le x}g(n)$ where $g(n)=f(n)/n$,
has had many application, so it is worse providing a sketch of the
present method. When $h=1$ and $f$ is restricted to
square-free integers, our method relies on the identity (as noticed
immediately after Lemma~\ref{ident}):
\begin{multline*}
  \sum_{n\le X}\frac{f(n)}{n}(\log n)^2=
  \sum_{m\le X}\frac{f(m)}{m}
  \\\biggl(\sum_{p_1p_2\le X/m}\frac{f(p_1)f(p_2)(\log p_1)(\log p_2)}{p_1p_2}
  +\sum_{p\le X/m}\frac{f(p)(\log p)^2}{p}\biggr)+\text{error}.
\end{multline*}
A similar equation could be reached by noticing that, by the Selberg
Formula $\log^2=\1\star(\Lambda\log+\Lambda\star\Lambda)$, we have
\begin{multline*}
  \sum_{n\le X}\frac{f(n)}{n}(\log n)^2=
  \sum_{m\le X}\frac{f(m)}{m}
  \\\biggl(\sum_{\substack{p_1p_2\le X/m,\\ (p_1p_2,m)=1}}\frac{f(p_1p_2)(\log p_1)(\log p_2)}{p_1p_2}
  +\sum_{\substack{p\le X/m,\\ (p,m)=1}}\frac{f(p)(\log p)^2}{p}\biggr).
\end{multline*}
Our usage of $\Lambda_f$ thus avoids the coprimality conditions that
soon become a true combinatorial hurdles. Then by Lemma~\ref{prep} (or
Lemma~\ref{lifeagain}), we approximate the sum of the two sums over
primes above by $\kappa(\kappa+1)(\log Y)^2 + c(\log Y)+\Ocal(\log\log
3Y)$ and we notice that
\begin{equation*}
  (\log n)^2=\Bigl(\log X-\log\frac{X}{n}\Bigr)^2
  =(\log X)^2-2(\log X)\log\frac{X}{n}+\Bigl(\log\frac{X}{n}\Bigr)^2.
\end{equation*}
This gives us
\begin{multline*}
  (\log X)^2G_0(X)-2(\log X)G_1(X)+G_2(X)
  =\kappa(\kappa+1) G_2(X)
  \\+\Ocal(G_0(X)\log\log 3X).
\end{multline*}
We then convert this in an approximate differential equation in $H_2$
of Euler's type, i.e. it can be reduced to an approximate linear
differential equation, for which one can prove deformation results.

\bibliographystyle{abbrv}
\bibliography{bibliography}


\end{document}